\DeclareMathOperator{\aut}{Aut}
 \DeclareMathOperator{\frat}{Frat}
 \DeclareMathOperator{\psl}{PSL}
\DeclareMathOperator{\End}{End}
\newtheorem{thm}{Theorem}
 \newtheorem{lemma}[thm]{Lemma}
\numberwithin{equation}{section}
\renewcommand{\footnote}{\endnote}
\newcommand{\ignore}[1]{}\makeglossary
\begin{document}
	\bibliographystyle{amsplain}
	\title[Finite solube groups satisfying the replacement property]{Finite soluble groups satisfying\\ the replacement property}

\author{Andrea Lucchini}
\address{
	Andrea Lucchini\\ Universit\`a degli Studi di Padova\\  Dipartimento di Matematica \lq\lq Tullio Levi-Civita\rq\rq \\email: lucchini@math.unipd.it}

	\begin{abstract}We investigate the finite soluble groups $G$ with the following property (replacement property): for every irredundant  generating set $\{g_1,\dots,g_m\}$ of maximal size and for any $1\neq g\in G$ there
		exists an $i\in \{1,\dots,m\}$ so that $\langle g_1,\dots,g_{i-1},g,g_{i+1},\dots,g_m\rangle=G.$
	\end{abstract}
	\maketitle

\section{Introduction}

Let $G$ be a finite group. We say that a subset
$\Omega= \{g_1,\dots,g_n\}$ of $G$ is irredundant (or independent) if every proper
subset of $\Omega$ generates a proper subgroup of $H.$ Let  $m(G)$ the largest size of an irredundant generating set. An important aspect of vector spaces is the elementary fact that any linearly independent set can replace a segment of a basis. One can try to generalize this notion to arbitrary groups. Instead of looking at bases, one can consider the irredundant generating sets. Also, instead of replacing many elements of the generating set, the focus could  be on replacing a single element. This led D. Collins
and R. K. Dennis to make the following definitions.
A group $G$ satisfies the replacement property
	for the generating sequence $\Omega = \{g_1,\dots,g_n\}$ if, for any $1\neq g,$ there
	exists an $i\in \{1,\dots,n\}$ so that $\Omega^\prime=\{g_1,\dots,g_{i-1},g,g_{i+1},\dots,g_n\}$ generates $G.$
	A group $G$ is said to satisfy the replacement property if it satisfies the
	replacement property for all the irredundant generating sequences of length $m(G).$ 
	
	\
	
	In this short note we investigate the finite soluble groups satisfying the replacement property, giving  some alternative characterizations. Before stating our main result, we need to recall some definitions.
	
	\
	
	A group $G$ is called a $K$-group (a complemented group) if its subgroup lattice
	is a complemented lattice, i.e., for a given $H \leq G$ there exists  $X \leq  G$ such
	that $\langle H, X\rangle=G$ and $H \cap X = 1.$ We refer to \cite[Section 3.1]{sl} for an exposition of the main properties of the $K$-groups.

	\
	
	The  M\"{o}bius function $\mu_G$ on the subgroup lattice of $G$ is the function defined inductively by $\mu_G(G)=1$ and  $\mu_G(H)=-\sum_{K < H}\mu_G(K)$ for any $H < G$. 

\
	
Our main result is the following.
	
\begin{thm}\label{main} Let $G$ be a finite soluble group. The following are equivalent:
\begin{enumerate}
	\item $G$ satisfies the replacement property.
\item $G$ is a $K$-group.
\item $\mu_G(1)\neq 0$. 
\end{enumerate}
\end{thm}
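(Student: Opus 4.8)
The plan is to argue by induction on $|G|$, the statement being trivial for $G=1$. The first step is to dispose of the case $\frat(G)\neq 1$: here I claim \emph{all three} properties fail, so the three are (trivially) equivalent. Fix $1\neq z\in\frat(G)$; since $z$ is a non-generator, replacing any member of any irredundant generating set by $z$ leaves the generated subgroup unchanged, hence proper, so no single replacement works and (1) fails (note $m(G)\geq 1$ as $G\neq 1$). By Hall's observation that $\mu_G(H)\neq 0$ forces $H$ to be an intersection of maximal subgroups, taking $H=1$ gives $\mu_G(1)=0$, so (3) fails. Finally, a minimal normal subgroup of $G$ contained in $\frat(G)$ is a non-complemented chief factor, so $G$ is not a $K$-group; here I invoke the classical structural fact that a finite soluble group is a $K$-group precisely when all of its chief factors are complemented. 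Thus from now on $\frat(G)=1$.

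Fix a minimal normal subgroup $N$ of $G$. Since $N\not\leq\frat(G)=1$ it is complemented, say by a maximal subgroup $M\cong G/N$, and $N$ is an irreducible module for $M$ (being an irreducible $G$-module on which $N$ acts trivially). Write $\overline{G}=G/N$, to which the inductive hypothesis applies. The equivalence (2)$\Leftrightarrow$(3) for $G$ is inherited from $\overline{G}$: by the chief-factor characterization, $G$ is a $K$-group iff $\overline{G}$ is, while the standard Möbius recursion along the complemented chief factor $N$ yields $\mu_G(1)=-r\,\mu_{\overline{G}}(1)$, where $r\geq 1$ is the number of complements of $N$ in $G$, so $\mu_G(1)\neq 0$ iff $\mu_{\overline{G}}(1)\neq 0$. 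Hence everything reduces to proving (1)$\Leftrightarrow$(2) for $G$. For this I would first record the identity $m(G)=m(\overline{G})+1$: the inequality $\geq$ follows by enlarging a maximal-size irredundant generating set of the complement $M$ by any nonzero element of $N$ (using irreducibility of $N$ as an $M$-module); and $\leq$ follows because, in a maximal-size irredundant generating set $\Omega=\{g_1,\dots,g_t\}$ of $G$ whose image in $\overline{G}$ fails to be irredundant, some $g_{i_0}$ becomes redundant modulo $N$, which forces $H_{i_0}:=\langle g_j : j\neq i_0\rangle$ to be a complement of $N$, hence $\Omega\setminus\{g_{i_0}\}$ to be an irredundant generating set of $H_{i_0}\cong\overline{G}$ of size $t-1$.

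For (2)$\Rightarrow$(1): assume $G$ is a $K$-group and let $\Omega=\{g_1,\dots,g_m\}$ be an irredundant generating set of maximal size $m=m(\overline{G})+1$. Its image in $\overline{G}$ is too large to be irredundant, so as above some $g_{i_0}$ is redundant modulo $N$, $H_{i_0}$ is a maximal complement of $N$, and $\Omega\setminus\{g_{i_0}\}$ is a maximal-size irredundant generating set of $H_{i_0}\cong\overline{G}$. Given $1\neq g\in G$, either $g\notin H_{i_0}$, and maximality of $H_{i_0}$ gives $\langle g, g_j : j\neq i_0\rangle=G$, or $g\in H_{i_0}$, and then the replacement property of $H_{i_0}$ (available by the inductive hypothesis, since $\overline{G}$ is a $K$-group) applied to $\Omega\setminus\{g_{i_0}\}$ and $g$ produces some $j\neq i_0$ with $\langle g, g_k : k\notin\{i_0,j\}\rangle=H_{i_0}$, whence re-adjoining $g_{i_0}\notin H_{i_0}$ yields $\langle g, g_k : k\neq j\rangle=G$. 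Conversely, for (1)$\Rightarrow$(2) I would show that (1) passes to $\overline{G}$ and then invoke the inductive hypothesis and the chief-factor characterization: given a maximal-size irredundant generating set of $\overline{G}$, transport it into the complement $M$, adjoin a nonzero $n\in N$ to get a maximal-size irredundant generating set $\Omega$ of $G$, and for $1\neq\overline{g}$ lift it to $g\in M\setminus\{1\}$; applying (1) for $G$ to $\Omega$ and $g$, the replaced element cannot be $n$ (otherwise the new set would lie in $M<G$), so reducing the resulting equality modulo $N$ gives the required replacement in $\overline{G}$. The main obstacle, beyond importing the chief-factor description of finite soluble $K$-groups, is getting the behaviour of maximal-size irredundant generating sets under reduction modulo $N$ exactly right — the identity $m(G)=m(\overline{G})+1$ and the ``complement plus one redundant generator'' decomposition of such a set — since irredundant generating sets lack the rigidity of vector-space bases; once this is in place, the two transfer arguments are elementary.
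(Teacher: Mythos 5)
Your argument is correct, but it follows a genuinely different route from the paper's. The paper splits (1)$\Leftrightarrow$(2) into two inductive lemmas: for (2)$\Rightarrow$(1) it quotes \cite[Theorem 2]{arch} (that $m(G)$ equals the number of complemented factors in a chief series) to get $m(G/N)=m(G)-1$, and for (1)$\Rightarrow$(2) it works with the whole Fitting subgroup $F=N_1\times\cdots\times N_t$ and a complement $H$, transferring the replacement property to $H$ and then reassembling $G=FH$ via Schmidt's Theorem 3.1.10; the equivalence with (3) is handled separately through the Hawkes--Isaacs--\"Ozaydin product formula. You instead run one induction for all three conditions: you dispose of $\frat(G)\neq 1$ by showing all three properties fail (your phrase that adjoining a Frattini element leaves the generated subgroup ``unchanged'' is not literally true, but the non-generator property gives properness, which is all you need), and in the Frattini-free case you work with a single complemented minimal normal subgroup $N$, proving the identity $m(G)=m(G/N)+1$ and the ``complement plus one redundant generator'' structure of maximal irredundant sets by hand --- thereby avoiding the citation of \cite{arch} --- and using the chief-factor characterization of soluble $K$-groups (the same fact the paper extracts from Schmidt's book) in place of Schmidt's 3.1.3/3.1.10 for the heredity between $G$ and $G/N$; your M\"obius step $\mu_G(1)=-r\,\mu_{G/N}(1)$ is just the one-step form of the Hawkes--Isaacs--\"Ozaydin formula that the paper applies globally. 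What your route buys is self-containedness on the generation side (the rank identity is proved rather than quoted) and a single uniform induction; what it costs is that the verifications you compress into ``hence'' should be written out --- notably the irredundancy of $\Omega\setminus\{g_{i_0}\}$ as a generating set of the complement $H_{i_0}$, which requires $g_{i_0}\notin H_{i_0}$ and the maximality of $H_{i_0}$, and the series-independence implicitly used when the chief-factor criterion is applied to one chosen chief series --- though all of these are elementary and correct.
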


The previous statement does not remain true if one drops the solubility assumption. In \cite{cz} M. Costantini and G. Zacher proved that every finite simple group is a $K$-group. However B. Nachman proved that if $p \not\equiv \pm 1\! \mod 10$ and $p \neq 7$ but $p \equiv 1 \! \mod 8,$ then $\psl(2,p)$ fails
the replacement property (see \cite[Corollary 4.4]{nach}), so not all the finite $K$-groups satisfy the replacement property.
 Whiston and Saxl \cite{ws} noticed that $m(\psl(2,7))=m(\psl(2,11))=4$ and therefore  it follows from \cite[Corollary 4.2]{nach} that $\psl(2,7)$ and $\psl(2,11)$ satisfy the replacement property, however $\mu_{\psl(2,7)}(1)=0$ while  $\mu_{\psl(2,11)}(1)=660.$
 
 \
 
We finally introduce a  stronger property.   A group  $G$ is said to satisfy the strong replacement property if it satisfies the replacement property for all the irredundant sequences (not only for those of maximal size). We prove that only few finite soluble groups satisfies this property.

\begin{thm}\label{strong}A finite soluble group $G$ satisfies the strong replacement property if and only if one of the following occurs.
	\begin{enumerate}
		\item $G$ is an elementary abelian $p$-group.
		\item 	$G=V^t \rtimes  H,$ where $H$ is a cyclic group of prime order and $V$ is a faithful irreducible $H$-module. 
	\end{enumerate} 
\end{thm}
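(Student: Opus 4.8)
\textit{Proof proposal.} The plan is to deduce Theorem~\ref{strong} from Theorem~\ref{main} together with the observation that the strong replacement property is only interesting when irredundant generating sets of different sizes occur. Write $d(G)$ for the minimal size of a generating set. First I record that \emph{every irredundant generating set of $G$ has size $m(G)$ if and only if $d(G)=m(G)$}: a generating set of minimal size is irredundant, so $d(G)\le m(G)$ always, and if $d(G)=m(G)$ the size of any irredundant generating set is squeezed between these two equal numbers. When $d(G)=m(G)$ the strong replacement property coincides with the replacement property, hence (for soluble $G$) with being a $K$-group by Theorem~\ref{main}. So Theorem~\ref{strong} will follow from two assertions for soluble $G$: (a) $G$ is a $K$-group with $d(G)=m(G)$ if and only if $G$ has type (1) or (2); and (b) if $G$ satisfies the strong replacement property then $d(G)=m(G)$.

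For the ``if'' part of (a): if $G$ is elementary abelian, the irredundant generating sets are precisely the $\FF_p$-bases of $G$, all of size $\dim_{\FF_p}G=d(G)=m(G)$, and subspaces are complemented, so $G$ is a $K$-group; in fact the replacement property here is just the Steinitz exchange lemma. If $G=V^t\rtimes H$ with $H\cong C_q$ and $V$ faithful irreducible, set $N=V^t$; then $N=\fit(G)=\soc(G)=C_G(N)$, and writing $\FF=\End_H(V)$ and identifying $N$ with $\FF^t$, the $H$-submodules of $N$ are exactly the $\FF$-subspaces. Given an irredundant generating set $\Omega$, pick $g\in\Omega$ with nontrivial image in $H$; since $g$ has order $q$, replacing each other member of $\Omega$ by its product with an appropriate power of $g$ shows $\langle\Omega\rangle=W\rtimes\langle g\rangle$ with $W$ the $\FF$-span of the modified elements of $N$, so generation forces $W=N$ and irredundancy forces those elements of $N$ to be $\FF$-linearly independent; hence $|\Omega|=t+1=d(G)=m(G)$. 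Moreover $G$ is a $K$-group: if $K\le G$ with $KN=G$ then $K\cap N$ is an $\FF$-subspace and any $\FF_p$-complement of $K\cap N$ in $N$ complements $K$; if $K\le N$ then $U\rtimes\langle x\rangle$ complements $K$, where $U$ is an $\FF$-complement of the $\FF$-span of $K$ in $N$ and $\langle x\rangle$ is any complement of $N$ in $G$. By Theorem~\ref{main}, $G$ satisfies the replacement property, hence the strong one.

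For the ``only if'' part of (a): let $G$ be a soluble $K$-group with $d(G)=m(G)$. Then $\Phi(G)=1$, all Sylow subgroups of $G$ are elementary abelian, and $F:=\fit(G)=\soc(G)$ is a direct product of minimal normal subgroups with $C_G(F)=F$. If $G$ is a $p$-group, $\Phi(G)=1$ forces $G$ elementary abelian (type (1)). Otherwise one shows $F<G$ (if $F=G$ then $G=\prod_pO_p(G)$ with at least two factors, and a union of bases of the $O_p(G)$ is an irredundant generating set of size $\sum_p\dim O_p(G)>\max_p\dim O_p(G)=d(G)$), writes $G=F\rtimes X$ using the $K$-group property, and then establishes in turn — in each case producing an irredundant generating set longer than $d(G)$ when the conclusion fails — that $X$ is cyclic of prime order $q$, that $F$ is a $p$-group, and that $F$ is a homogeneous $\FF_p[X]$-module whose homogeneous type is a faithful irreducible $V$; this gives $G=V^t\rtimes C_q$, type (2). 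The mechanism in the last two points is that if $F$ has a trivial summand, or two nonisomorphic irreducible summands, then some ``diagonal'' element of $F$ generates strictly more of $F$ as an $X$-module than any single element used in a longest irredundant generating set, so $d(G)<m(G)$; and if $X$ is not cyclic of prime order, lifting a generating set of a two-generated quotient $C_r\times C_s$ or $C_q\times C_q$ of $X$ again yields a short generating set forcing $d(G)<m(G)$.

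For (b): suppose $G$ is soluble, satisfies the strong replacement property, and $d(G)<m(G)$; I aim to exhibit an irredundant generating set violating the replacement property. If $d(G)=1$ then $G$ is cyclic, and the one-element generating set formed by a generator has the replacement property only if every nonidentity element generates $G$, i.e. $|G|$ is prime, whence $d(G)=m(G)=1$, a contradiction. For $d(G)\ge2$ take an irredundant generating set $\Omega=\{g_1,\dots,g_\ell\}$ of size $\ell=d(G)$ and set $M_i=\langle\Omega\setminus\{g_i\}\rangle<G$; then the replacement property for $\Omega$ fails precisely when some nonidentity element lies, for every $i$, in a maximal subgroup containing $M_i$. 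Using the existence of a strictly longer irredundant generating set together with the $K$-group structure of $G$ — in which $\Phi(G)=1$, and maximal subgroups have prime-power index and are complemented — one produces a nonidentity $g$ with this forbidden property. Making this choice of $g$ uniform is the technical heart of the proof and, I expect, the main obstacle; once it is done, (b) follows, and with it Theorem~\ref{strong}.
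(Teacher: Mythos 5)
Your overall reduction is logically sound, and your ``if'' direction (groups of type (1) or (2) are $K$-groups in which every irredundant generating set has size $m(G)=d(G)$, so the strong replacement property follows from Theorem~\ref{main}) is essentially correct, if more hands-on than necessary. The genuine gap is that the entire forward direction rests on your assertions (a only-if) and (b), and neither is proved. In (b) you yourself flag the decisive step --- producing, in a soluble $K$-group with $d(G)<m(G)$, an irredundant generating sequence and a nontrivial element that cannot replace any of its members --- as ``the technical heart'' and ``the main obstacle''; this is not a routine verification but is essentially the content of the theorem, so leaving it open leaves the proof open. Likewise, in (a only-if) the claims that $X$ is cyclic of prime order, that $F=\fit(G)$ is an elementary abelian $p$-group, and that $F$ is a homogeneous module with faithful irreducible homogeneous type are supported only by a loosely described ``mechanism'' (a ``diagonal'' element generating more of $F$ than any element of a longest irredundant set); making this precise requires exactly the module-theoretic generation arguments (how many elements are needed to generate $V^k$ as an $H$-module, cyclic module generators) that carry the real difficulty, and they are not supplied.

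For comparison, the paper bypasses both (a only-if) and (b) with a much shorter observation that your proposal misses: applying the strong replacement property to a generating sequence of minimal size $d=d(G)$ and to an element $1\neq n\in N$, for any nontrivial normal subgroup $N$, immediately gives $d(G/N)\le d-1$. Hence every proper quotient of $G$ needs fewer generators than $G$, and such soluble groups are classified in \cite{dvl}: either $G$ is elementary abelian, or $G=(V_1\times\dots\times V_t)\rtimes H$ with $t=r(d-2)+1$ and $r=\dim_{\End_H(V)}V$. The remaining work in the paper is then concentrated in one carefully chosen irredundant sequence, built from a cyclic generator of $V^r$ supplied by Gruenberg's lemma \cite{gru}, to which the replacement property is applied to force $r=1$, after which $H$ embeds in the multiplicative group of a field and a final application of the replacement property forces $|H|$ prime. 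If you wish to keep your route (which avoids \cite{dvl}), you must actually construct the violating element in (b) and carry out the case analysis in (a only-if); as it stands the proposal proves only the easy direction.
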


\section{Proofs of Theorem \ref{main} and Theorem \ref{strong}}

We first recall a result from \cite{arch} that will play a relevant role in the proof of Theorem \ref{main}.

\begin{thm}\label{archmin} \cite[Theorem 2]{arch} Let $G$ be a finite soluble group. Then $m(G)$ coincides with the
number of complemented factors in a chief series of $G.$
\end{thm}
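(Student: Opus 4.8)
The plan is to prove the identity by induction on $|G|$, comparing both quantities across a single chief step. Write $c(G)$ for the number of complemented chief factors in a chief series of $G$; this count is independent of the chosen series by the Jordan--Hölder theorem, complementation being preserved under the canonical correspondence of chief factors. Fix a minimal normal subgroup $N$ of $G$ and set $\bar G = G/N$; since $G$ is soluble, $N$ is elementary abelian and hence an irreducible $G$-module. The correspondence theorem identifies the chief factors of $G$ lying above $N$ with those of $\bar G$, preserving complementation, so $c(G) = c(\bar G) + \epsilon$, where $\epsilon = 1$ if $N$ is complemented and $\epsilon = 0$ otherwise. Recalling that for a minimal normal subgroup complementation is equivalent to $N \not\le \Phi(G)$ (a maximal subgroup avoiding $N$ complements it, while a Frattini $N$ has no complement), it then suffices to prove the parallel formula $m(G) = m(\bar G) + \epsilon$ and invoke the inductive hypothesis $m(\bar G) = c(\bar G)$.

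First I would dispose of the Frattini case $N \le \Phi(G)$, where $\epsilon = 0$. Here a subset of $G$ generates $G$ precisely when its image generates $\bar G$, and, using $N \le \Phi(G)$, distinct elements of an irredundant generating set remain distinct and irredundant modulo $N$. This sets up a bijection between irredundant generating sets of $G$ and of $\bar G$, giving $m(G) = m(\bar G)$ directly.

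The core of the argument is the complemented case $N \not\le \Phi(G)$, where I must show $m(G) = m(\bar G) + 1$. For the lower bound I would start from an irredundant generating set of a complement $K \cong \bar G$ of $N$, of size $m(\bar G)$, and adjoin any $1 \ne n \in N$. Since $K$ surjects onto $G/C_G(N)$, the module $N$ is irreducible for $K$, so $\langle n^{K}\rangle = N$ and hence $\langle K, n\rangle = G$; the modular (Dedekind) law then shows the enlarged set stays irredundant, yielding $m(G) \ge m(\bar G) + 1$. For the matching upper bound -- the step I expect to be the main obstacle -- I would show that reduction modulo $N$ destroys at most one element of an irredundant generating set $\{g_1,\dots,g_m\}$. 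Concretely, if some $(m-2)$-subset of the images already generated $\bar G$, say $\langle \bar g_1,\dots,\bar g_{m-2}\rangle = \bar G$, then $P = \langle g_1,\dots,g_{m-2}\rangle$ satisfies $PN = G$ while being proper; since $N$ is minimal normal, $P\cap N$ is normal in $G$, so any such proper supplement must in fact complement $N$, forcing $|P| = |G:N|$. The same applies to $R = \langle g_1,\dots,g_{m-1}\rangle$, and $P \le R$ with equal orders gives $P = R$, contradicting irredundancy. Hence no $(m-2)$-subset of the images generates $\bar G$; a minimal generating subset of the images is therefore irredundant of size at least $m-1$, so $m(\bar G) \ge m(G) - 1$. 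Combining the two bounds closes the induction.

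The delicate point throughout is the rigidity just exploited: over a minimal normal subgroup of a soluble group, a proper "supplement" automatically upgrades to a "complement", pinning down the orders of $P$ and $R$. I expect that verifying this rigidity, supplying the module-irreducibility input in the lower bound, and the bookkeeping that a minimal generating subset is automatically irredundant are where the real care is needed; the Frattini case and the Jordan--Hölder well-definedness of $c(G)$ should be routine.
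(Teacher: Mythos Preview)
The paper does not prove this statement: Theorem~\ref{archmin} is simply quoted from \cite[Theorem~2]{arch} and then used as a black box in Lemmas~\ref{uno} and~\ref{due}, so there is no in-paper argument to compare yours against.

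For what it is worth, your induction is correct. The Frattini case and the lower bound in the complemented case go through as you describe. The upper bound is where the content lies, and your rigidity step is right: if $PN=G$ with $P<G$ and $N$ minimal normal abelian, then $P\cap N$ is normalised by both $P$ and $N$, hence by $G$, forcing $P\cap N=1$; applying this first to $P=\langle g_1,\dots,g_{m-2}\rangle$ and then to $R=\langle g_1,\dots,g_{m-1}\rangle$ gives $|P|=|R|=|G:N|$ and so $P=R$, whence $g_{m-1}\in P$ and the original set was not irredundant. One cosmetic point: the final extraction of an irredundant generating set of $\bar G$ of size at least $m-1$ is cleanest if phrased for index-subsets of the \emph{sequence} $(\bar g_1,\dots,\bar g_m)$, since a priori some images could coincide; once no $(m-2)$ indices suffice to generate $\bar G$, the distinct images number at least $m-1$ and a minimal generating subset among them is irredundant of the required size.
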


\begin{lemma}\label{uno}
Let $G$ be a finite soluble group. If $G$ is a $K$-group, then $G$ satisfies the replacement property.
\end{lemma}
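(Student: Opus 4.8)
The plan is to argue by induction on $|G|$; when $|G|=1$ the statement is vacuous, so assume $|G|>1$ and pick a minimal normal subgroup $N$ of $G$. Since $G$ is soluble, $N$ is elementary abelian, and since $G$ is a $K$-group, $N$ has a complement in $G$. I will use two facts. First, any complement of a minimal normal subgroup is a maximal subgroup of $G$. Second, every epimorphic image of a $K$-group is again a $K$-group: if $N\leq H\leq G$ and $X$ is a lattice complement of $H$ in $G$, then $XN/N$ is a complement of $H/N$ in $G/N$, because $\langle H,X\rangle=G$ gives $\langle H/N,\,XN/N\rangle=G/N$, while $N\leq H$ forces $H\cap XN=N$ (if $h=xn$ with $h\in H$, $x\in X$, $n\in N\leq H$, then $x=hn^{-1}\in H\cap X=1$).

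Next I would show $m(G/N)=m(G)-1$. Fix a chief series of $G$ beginning with $N$. By Theorem \ref{archmin}, $m(G)$ is the number of complemented factors of this series and $m(G/N)$ is the number of complemented factors of the induced chief series of $G/N$. Now $N$ is a complemented chief factor of $G$, while a chief factor $H/K$ with $N\leq K$ is complemented in $G$ if and only if $(H/N)/(K/N)$ is complemented in $G/N$: any supplement $U$ of $H$ in $G$ with $H\cap U=K$ satisfies $N\leq K\leq U$, hence projects to a supplement in $G/N$, and conversely. Comparing the two counts gives $m(G)=1+m(G/N)$. Now take an irredundant generating set $\{g_1,\dots,g_m\}$ with $m=m(G)$ and a non-identity $g\in G$; let $\overline{g}$ denote the image of $g$ in $G/N$. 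The elements $\overline{g_1},\dots,\overline{g_m}$ generate $G/N$, but since $m>m(G/N)$ they cannot be irredundant, so some $\overline{g_j}$ is redundant; that is, $L:=\langle g_i : i\neq j\rangle$ satisfies $LN=G$. By irredundancy $L$ is proper, so $L\cap N$ is a proper $G$-invariant subgroup of $N$ (it is normalized by $L$ and, as $N$ is abelian, centralized by $N$, hence normalized by $LN=G$); minimality of $N$ forces $L\cap N=1$. Thus $L$ is a complement of $N$, hence maximal in $G$, and $L\cong G/N$ is a soluble $K$-group of smaller order with $m(L)=m-1$; moreover $\{g_i : i\neq j\}$ is an irredundant generating set of $L$ of maximal size, and $g_j\notin L$.

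Then I would finish by a dichotomy. If $g\notin L$, maximality of $L$ gives $\langle L,g\rangle=G$, so replacing $g_j$ by $g$ already generates $G$. If $g\in L$, the inductive hypothesis applies to $L$ and its maximal-size irredundant generating set $\{g_i : i\neq j\}$: there is an index $i\neq j$ with $\langle(\{g_k : k\neq j\}\setminus\{g_i\})\cup\{g\}\rangle=L$; adjoining $g_j\notin L$ and using maximality of $L$ in $G$ gives $\langle\{g_k : k\neq i\}\cup\{g\}\rangle=\langle L,g_j\rangle=G$. In either case the replacement property holds for $\{g_1,\dots,g_m\}$, closing the induction.

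Most of the argument is bookkeeping once the reduction is set up; the main points to get right are those underlying the passage to $L$: that $L$ is genuinely a $K$-group (so that the inductive hypothesis is available), which rests on $K$-groups being closed under epimorphic images, and that the inequality $m(G/N)<m(G)$ is strict (so that a redundant image generator $\overline{g_j}$ exists), which rests on Theorem \ref{archmin} together with the comparison of complemented chief factors lying above $N$.
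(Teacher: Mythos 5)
Your proof is correct and follows essentially the same route as the paper's: induction on $|G|$, reduction modulo a complemented minimal normal subgroup $N$ using $m(G/N)=m(G)-1$ from Theorem \ref{archmin}, and the dichotomy $g\notin L$ versus $g\in L$ for the complement $L$ of $N$ generated by the remaining $m-1$ elements. The only differences are that you prove in-line the facts the paper cites from Schmidt's book (quotients of $K$-groups are $K$-groups, and the comparison of complemented chief factors), and you correctly observe that irredundancy already rules out the degenerate case $L=G$ which the paper lists as a separate case.
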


\begin{proof}
We prove the statement by induction on the order. Let $\Omega=\{x_1,\dots,x_m\}$ be an irredundant generating sequence of $G,$ with $m=m(G),$ and let $1\neq g\in G.$ We have to prove that there
exists  $i\in \{1,\dots,m\}$ so that $\langle g_1,\dots,g_{i-1},g,g_{i+1},\dots,g_m\rangle=G.$
The statement is certainly true if $G=1$. So assume $G\neq 1$ and let $N$ be a minimal normal subgroup of $G.$ Since $G$ is a $K$-group, $N$ is complemented in $G$ and it follows from Theorem \ref{archmin} that $m(G)-1=m(G/N).$ Hence $\{x_1N,\dots,x_mN\}$ is a redundant generating sequence of $G/N$, so it is not restrictive to assume, up to reordering the sequence $\{x_1,\dots,x_m\},$ that $G/N=\langle x_1N,\dots,x_{m-1}N\rangle.$ Let $H=\langle x_1,\dots,x_{m-1}\rangle.$ Since $HN=G$ we have two possibilities. 

\noindent a) $H=G.$ In this case $G=\langle x_1,\dots,x_{m-1}\rangle=
\langle x_1,\dots,x_{m-1},g\rangle.$

\noindent b) $H$ is a complement of $N$ in $G.$ Notice that in this case $H$ is a maximal subgroup of $G.$ If $g\neq H,$ then 
$G=\langle H, g\rangle=\langle x_1,\dots,x_{m-1},g\rangle.$ We remain with the case $g\in H.$ By \cite[Lemma 3.1.3]{sl}, $H\cong G/N$ is a $K$-group. By induction, $H$ satisfies the replacement property so, since $m(H)=m(G/N)=m-1,$ we conclude that there exists  $i$ so that $H=\langle x_1,\dots,x_{i-1},g,x_{i+1},\dots,x_{m-1}\rangle$. It follows that $G=\langle H, x_m\rangle=\langle x_1,\dots,x_{i-1},g,x_{i+1},\dots,x_{m-1},x_m\rangle=G.$
\end{proof}

\begin{lemma}\label{due}
	Let $G$ be a finite soluble group. If $G$  satisfes the replacement property, then $G$ is a $K$-group.
\end{lemma}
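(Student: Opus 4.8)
The plan is to argue by contrapositive: assuming $G$ is not a $K$-group, I will exhibit an irredundant generating sequence of length $m(G)$ and a nontrivial element $g$ for which no single-coordinate replacement yields $G$. The natural place to look for the failure of the $K$-group property is at a chief factor: since $G$ is soluble, $G$ is a $K$-group if and only if every chief factor in some (equivalently every) chief series is complemented — this is essentially the content combining \cite[Section 3.1]{sl} with the structure theory of soluble groups. So if $G$ is not a $K$-group, fix a chief series and let $N \lhd M \lhd G$ be chosen so that $M/N$ is a \emph{non-complemented} chief factor, with $N$ minimal such (i.e. everything below is fine). Passing to $G/N$ and using induction on the order, I expect to reduce to the case where $N = 1$, so that $M$ is a non-complemented minimal normal subgroup of $G$.

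The next step is to use Theorem \ref{archmin}: $m(G)$ equals the number of complemented chief factors, and since $M$ is \emph{not} complemented, $m(G/M) = m(G)$. Now take an irredundant generating sequence $\{y_1,\dots,y_{m}\}$ of $G/M$ of maximal length $m = m(G)$ and lift it to elements $x_1,\dots,x_m \in G$; because $M$ is not complemented and lies in the Frattini-like part of the relevant section, $\langle x_1,\dots,x_m \rangle$ should still be all of $G$ after possibly adjusting the lifts, and the sequence remains irredundant of length $m(G)$ (here I would need the lemma-style fact that a non-complemented chief factor forces $M \le \Phi$ of the relevant subgroup, so lifts of generators still generate). The point is then that \emph{every} replacement $\{x_1,\dots,x_{i-1},g,x_{i+1},\dots,x_m\}$ projects modulo $M$ to a sequence in $G/M$, and for the replacement to generate $G$ it must at least generate $G/M$; so the only candidates for $g$ that could work are those whose image is "useful" in $G/M$. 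Choosing $g \in M$ nontrivial (possible since $M \neq 1$), the replaced sequence maps modulo $M$ to $\{y_1,\dots,y_{i-1},1,y_{i+1},\dots,y_m\}$, which generates a proper subgroup of $G/M$ (as the $y_j$ are irredundant), hence $\langle x_1,\dots,x_{i-1},g,x_{i+1},\dots,x_m\rangle \neq G$ for every $i$. This contradicts the replacement property, completing the contrapositive.

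The main obstacle I anticipate is the lifting/generation bookkeeping in the middle step: one must ensure that the chosen non-complemented factor $M$ can be taken so that $M$ is contained in the Frattini subgroup of $G$ itself (not merely of some subgroup), so that \emph{any} lift of a generating sequence of $G/M$ generates $G$, and that irredundancy is preserved under lifting — this uses that in a soluble group a chief factor is either complemented or central-in-its-Frattini-cover in a precise sense. If $M$ is not itself Frattini in $G$ but only "non-complemented as a chief factor" in the series-theoretic sense, there could be a complement to $M$ that doesn't arise from the chosen series, and then $M$ \emph{is} complemented after all; so the reduction really needs to be set up so that the minimal counterexample has a genuinely Frattini minimal normal subgroup. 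I would handle this by first reducing (via the induction) to the case $\Phi(G) \neq 1$ — if $\Phi(G) = 1$ and $G$ is soluble then $G$ is a $K$-group by standard facts — and then taking $M \le \Phi(G)$ minimal normal, at which point the argument above goes through cleanly since $m(G) = m(G/M)$ and lifts of generating sequences of $G/M$ automatically generate $G$.
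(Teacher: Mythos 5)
Your argument correctly handles one half of the problem: the observation that the replacement property forces $\Phi(G)=1$ is sound (and can be done even more directly, since a nontrivial element of $\Phi(G)$ is a non-generator and so can never replace a member of an irredundant generating sequence). The fatal problem is the sentence you lean on to finish: ``if $\Phi(G)=1$ and $G$ is soluble then $G$ is a $K$-group by standard facts.'' This is false. Take $G=C_5\rtimes C_4$, the Frobenius group of order $20$ with $C_4=\aut(C_5)$ acting faithfully. Its five Sylow $2$-subgroups intersect pairwise trivially, so $\Phi(G)=1$; yet the chief factor $D_{10}/C_5$ is not complemented (a complement would amount to a complement of the subgroup of order $2$ in $G/C_5\cong C_4$), and indeed the subgroup $D_{10}$ has no lattice complement, since any $X$ with $\langle D_{10},X\rangle=G$ must contain an element of order $4$ and hence an involution, and all involutions of $G$ lie in $D_{10}$. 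So $G$ fails to be a $K$-group even though $\Phi(G)=1$: the non-complemented chief factors obstructing the $K$-property can sit strictly above the socle, where your construction (pick $g$ in a non-complemented \emph{minimal normal} subgroup) has nothing to grab.

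Your fallback --- choosing a non-complemented chief factor $M/N$ with $N$ minimal and ``passing to $G/N$ by induction'' --- does not repair this, because that induction would require the replacement property to be inherited by quotients, which is exactly what is not known at this point (and the lengths change: $m(G)$ may exceed $m(G/N)$, so a failure in $G/N$ does not lift to a failure for a maximal-length sequence in $G$). This is precisely the hard part of the lemma. The paper's proof, after noting $\Phi(G)=1$, writes $F=\fit(G)=N_1\times\cdots\times N_t$ as a product of complemented minimal normal subgroups with a complement $H$, and shows that $H$ itself inherits the replacement property: one builds an irredundant generating sequence of $G$ of length $m(G)=t+m(H)$ from generators of the $N_j$ together with an irredundant sequence of $H$, and checks that a nontrivial $h\in H$ can only usefully replace an entry coming from $H$, since replacing a $y_j$ yields a complement of $N_j$ and hence a proper subgroup. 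Induction then gives that $H$ is a $K$-group, and $G=FH$ is a $K$-group by Schmidt's theorem on such products. Some step of this kind --- transferring the replacement property down to a complement of the Fitting subgroup, or another device reaching the higher chief factors --- is missing from your proposal.
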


\begin{proof}
We prove this statement by induction on the order of $G$. Assume that 
$G$ satisfies the replacement properties. The Frattini subgroup $\frat(G)$ of $G$ must be trivial, since a nontrivial element of the $\frat(G)$ cannot replace any element of a generating sequence. This implies that the Fitting subgroup $F=N_1\times \dots \times N_t$ of $G$ is a direct of $t$ minimal normal subgroups of $G$ and has a complement, say $H,$ in $G$ (see for example \cite[5.2.15]{rob}). We claim that $H$ satisfies the replacement property. Let $\mu=m(G/N)$, $\{x_1,\dots,x_\mu\}$ an irredundant generating sequence of  $H$ and $h$ a nontrivial element of $H.$  It can be easily seen that $\Omega:=\{y_1,\dots,y_t,x_1,\dots,x_\mu\}$ is an irredundant generating sequence of $G.$ Moreover, by Theorem \ref{archmin}, $m(G)=\mu+t.$ Since $G$ 
satisfies the replacement properties for $\Omega$, one of elements of $\Omega$ can be replaced by $h$ still obtaining a generating set.
 On the other hand, for every $j\in \{1,\dots,t\},$ the subgroup
$$\langle y_1,\dots,y_{j-1},h,y_{j+1},\dots,y_t,x_1,\dots,x_\mu \rangle =\langle y_1,\dots,y_{j-1},y_{j+1},\dots,y_\mu \rangle H$$ is a complement for $N_i$ in $G.$ So there exists $i\in \{1,\dots,\mu\}$ with 
$$G=\langle y_1,\dots,y_t,x_1,\dots,x_{i-1},h,x_{i+1},\dots,x_{\mu}\rangle.$$ But this implies $\langle x_1,\dots,x_{i-1},h,x_{i+1},\dots,x_{\mu}\rangle=H.$
We have so proved that $H$ satisfies the replacement property, but then by induction $H$ is a $K$-group. By \cite[Theorem 3.1.10]{sl}, $G=FH$ is a $K$-group.
\end{proof}

\begin{proof}[Proof of Theorem \ref{main}]By Lemma \ref{uno} and \ref{due}, (1) and (2) are equivalent. Moreover if follows easily from \cite[Theorem 3.1.12]{sl} that a finite soluble group $G$ is a $K$-group if and only if all the chief factors of $G$ are complemented in $G.$ On the other hand, by \cite[Corollary 3.4]{Hawkes}, if $H_0=1<\dots < H_n=G$ is a chief series
of a finite soluble $G$ and $k_i$ denotes the number of complements
	to $H_i/H_{i-i}$ in $G/H_{i-i},$  then
$\mu_G(1) = (-1)^nk_1k_2\cdots k_n.$ So $\mu_G(1)\neq 0$ if and only if all the chief factors of $G$ are complemented, i.e. if and only if $G$ is a $K$-group.
\end{proof}

\begin{proof}[Proof of Theorem \ref{strong}]
Given a finite group $X$, denote by $d(X)$ be the smallest size of an irredundant generating sequence of $X.$

Assume that $G$ satisfies the strong replacement property. Let $d=d(G)$ and assume $G=\langle g_1,\dots,g_d\rangle.$ Let $1\neq N$ be a normal subgroup of $G$ and let $1\neq n\in N.$ Since $G$ satisfies the replacement property
for the sequence $\{x_1,\dots,x_d\},$
there exists $i$ so that $\{g_1,\dots,g_{i-1},n,g_{i+1},\dots,g_d\}=G.$ This implies $d(G/N)\leq d-1.$ So $G$ has the property that every proper quotient can be generated by $d-1$ elements, but $G$ cannot. The groups with this property have been studied in \cite{dvl}. By \cite[Theorem 1.4 and Theorem 2.7]{dvl} either $G$ is an elementary abelian $p$-group of rank $d$  or there exist a finite vector space $V$ and a nontrivial irreducible soluble subgroup $H$ of $\aut(V)$
such that $d(H)<d$ and $G= (V_1\times \dots \times V_t) \rtimes  H,$
where $t=r(d-2)+1,$  $r$ is the dimension of $V$ over $F=\End_H(V)$ and each $V_i$ is $H$-isomorphic to $V.$ We claim that in the latter case  $r=1$ and $H$ is a cyclic group. If $d=2$, then $H$ is cyclic and $V$ is an absolutely irreducible $FH$-module, and this implies $r=1.$ So assume $d>2$. In this case
$t=r(d-2)+1>r.$ By \cite[Lemma 7.12]{gru} $W=V_{r(d-3)+2}\times \dots \times V_t\cong_H V^r$ is a cyclic $FH$-module. Let $w$ be an 
element of $W$ generating $W$ as an $H$-module, for $1\leq i \leq r(d-3)+1$ let $v_i$ be a nontrivial element of $V_i$ and let $\{h_1,\dots,h_n\}$ be an irredundant generating sequence of $H.$ Clearly $\Omega:=\{v_1,\dots,v_{r(d-3)+1},w,h_1,\dots,h_n\}$ is an irredundant generating sequence of $G.$ Let now $z$ be a nontrivial element of $V_t$. Since $G$ satisfies the replacement property
for $\Omega$, we may replace an element of $\Omega$ with $z$ still obtaining a generating sequence. However if we replace $h_i$ by $w$ we generate the proper subgroup $(V_1\times \dots \times V_t) \rtimes  \langle h_1,\dots,h_{i-1},h_{i+1},\dots,h_t\rangle$ and if we replace $v_j$ by $w$ we generate a complement of $V_j$ in $G$. So it must be
$G=\langle v_1,\dots,v_{r(d-3)+1},z,h_1,\dots,h_n\rangle$, but this implies $W=V_t$ and $r=1.$ In particular $V$ is a faithful absolutely irreducible $FH$-module and consequently $H$ is a subgroup of the multiplicative group of the field $F.$ So $H$ is a cyclic group. Let $h$ by a generator of $H$ and, for each $1\leq i \leq t$, let $v_i$ be a nontrivial element of $V_i.$
The group $G$ satisfies the replacement property
for the irredundant generating sequence $\{v_1,\dots,v_t,h\}$. This implies that if $1\neq h^*\in H,$
then $\langle v_1,\dots,v_t,h^*\rangle=G$ and consequently $\langle h^*\rangle=H.$ But then $H$ must be of prime order. 

Conversely assume that either $G$ is an elementary abelian $p$-group of rank $d$ or	$G=V^t \rtimes  H,$ where $H$ is a cyclic group of prime order and $V$ is a faithful irreducible $H$-module.
We have $d(G)=d$ in the first case, $d(G)=t+1$ in the second one (see for example \cite[Theorem 2.2]{lms}). But then we deduce from Theorem \ref{archmin} that $d(G)=m(G)$, so $G$ satisfies the strong replacement property if and only if it satisfies the replacement property. Moreover all the chief factors of $G$ are complemented, hence, by \cite[Theorem 3.1.12]{sl}, $G$ is a $K$-group. We conclude from Theorem \ref{main} that $G$ satisfies the replacement property.
\end{proof}

\end{document}